\documentclass[12pt]{amsart}

\usepackage{amscd,latexsym,amsthm,amsfonts,amssymb,amsmath,amsxtra}
\usepackage[mathscr]{eucal}
\usepackage{hyperref}
\usepackage{pdfsync}
\pagestyle{plain}
\setcounter{secnumdepth}{2}

\pagestyle{headings}
\renewcommand\theequation{\thesection.\arabic{equation}}

\makeatletter
\def\Ddots{\mathinner{\mkern1mu\raise\p@
\vbox{\kern7\p@\hbox{.}}\mkern2mu
\raise4\p@\hbox{.}\mkern2mu\raise7\p@\hbox{.}\mkern1mu}}
\makeatother

\makeatletter
\def\Ddots{\mathinner{\mkern1mu\raise\p@
\vbox{\kern7\p@\hbox{.}}\mkern2mu
\raise4\p@\hbox{.}\mkern2mu\raise7\p@\hbox{.}\mkern1mu}}
\makeatother

\newcommand{\GL}{{\mathrm{GL}}}

\newcommand{\Ind}{{\mathrm{Ind}}}

\newtheorem{thm}{Theorem}[section]
\newtheorem{cor}[thm]{Corollary}
\newtheorem{lem}[thm]{Lemma}

\newtheorem {ques/conj}[thm]{Question/Conjecture}

\newtheorem{rmk}[thm]{Remark}

\makeatletter

\newcommand{\Rmnum}[1]{\expandafter\@slowromancap\romannumeral #1@}
\makeatother

\begin{document}
\renewcommand{\theequation}{\arabic{equation}}
\numberwithin{equation}{section}

\title[Langlands Parameter for Simple Supercuspidals]{On The Langlands parameter of a Simple Supercuspidal Representation: Odd Orthogonal Groups}

\begin{abstract}
In this work, we explicitly compute a certain family of twisted gamma factors of a simple supercuspidal representation $\pi$ of a $p$-adic odd orthogonal group.  These computations, together with analogous computations for general linear groups carried out in previous work with Liu \cite{AL14}, allow us to give a prediction for the Langlands parameter of $\pi$.  If we assume the ``depth-preserving conjecture'', we prove that our prediction is correct if $p$ is sufficiently large.
\end{abstract}

\author{Moshe Adrian}
\address{Department of Mathematics\\
University of Toronto\\
Toronto, ON Canada M5S 2E4}
\email{madrian@math.toronto.edu}

\date{\today}

\subjclass[2010]{Primary 11S37, 22E50; Secondary 11F85, 22E55.}
\keywords{Simple supercuspidal, Local Langlands Conjecture.}

\thanks{MA is is supported in part by postdoc research funding from the Department of Mathematics at University of Toronto.}
\maketitle

\tableofcontents

\section{Introduction}\label{introduction}
Let $\mathbf{G}$ be a connected reductive group defined over a $p$-adic field $F$.  Recently, Gross, Reeder, and Yu \cite{GR10, RY13} have constructed a class of supercuspidal representations of $G = \mathbf{G}(F)$, called \emph{simple supercuspidal representations}.  These are the supercuspidal representations of $G$ of minimal positive depth.

Of interest is the Langlands correspondence for simple supercuspidal representations of $G$.  Recall that the Langlands correspondence (or LLC) is a certain conjectural finite-to-one map $$\Pi(G) \rightarrow \Phi(G)$$ from equivalence classes of irreducible admissible representations of $G$ to equivalence classes of  Langlands parameters of $G$.  In the past several years there has been much activity on the Langlands correspondence.  DeBacker and Reeder \cite{DR09} have constructed a correspondence for depth zero supercuspidal representations of unramified $p$-adic groups, and Reeder \cite{R08} extended this construction to certain positive depth supercuspidal representations of unramified groups.  Recently, under a mild assumption on the residual characteristic, Kaletha \cite{K13} has constructed a correspondence for simple supercuspidal representations of $p$-adic groups, and has verified that his correspondence satisfies many of the expected properties of the Langlands correspondence.  Kaletha has subsequently \cite{K15} extended these results to epipelagic supercuspidal representations of $p$-adic groups.

Our present work is the first in a series of papers dedicated to explicitly determining the Langlands parameter $\varphi_{\pi}$ of a simple supercuspidal representation $\pi$ of a classical group $G$, using the theory of gamma factors.  Let $\pi$ and $\tau$ be a pair of irreducible representations of $G$ and $GL_n$, where $G$ is either $SO_{2\ell}$, $SO_{2\ell+1}$ or $Sp_{2\ell}$.  Here, we assume that $SO_{2\ell+1}$ is split and $SO_{2\ell}$ is quasi-split.  Fix a nontrivial character $\psi$ of $F$.  The Rankin-Selberg integral for $G \times GL_n$ was constructed, in different settings, in a series of works including \cite{GPSR87, G90, GPSR97, GRS98, Sou93}.  If $G$ is orthogonal, we denote this Rankin-Selberg integral by $\Phi(W,f_s)$, and $\Phi(W,\phi, f_s)$ otherwise.  Here, $W$ is a Whittaker function for $\pi$, $f_s$ is a certain holomorphic section of a principal series (induced from $\tau$) depending on the complex parameter $s$, and $\phi$ is a Schwartz function.  Applying a standard normalized intertwining operator $M^*(\tau,s)$ to $f_s$, one obtains a similar integral $\Phi^*(W,f_s) = \Phi(W, M^*(\tau,s) f_s)$ if $G$ is orthogonal (and $\Phi^*(W, \phi,f_s) = \Phi(W, \phi, M^*(\tau,s) f_s)$ otherwise) related to $\Phi(W,f_s)$ (or $\Phi(W,\phi,f_s)$) by a functional equation

\[
\left\{
\begin{array}{lll}
\gamma(s, \pi \times \tau, \psi) \Phi(W,f_s) = \Phi^*(W,f_s) & \text{if} & G = SO_{2 \ell + 1}\\
\gamma(s, \pi \times \tau, \psi) \Phi(W,f_s) = c(s, \ell, \tau, \gamma) \Phi^*(W,f_s) & \text{if} & G = SO_{2 \ell}\\
\gamma(s, \pi \times \tau, \psi) \Phi(W,\phi, f_s) = \Phi^*(W,\phi, f_s) & \text{if} & G = Sp_{2 \ell}
\end{array} \right.
\]

For an accessible paper discussing all of these functional equations, we refer the reader to the recent work of Kaplan \cite{K14ii}.

The term $\gamma(s, \pi \times \tau, \psi)$ is known as the \emph{gamma factor} of $\pi \times \tau$, and is the key ingredient in determining $\varphi_{\pi}$ in this paper.  To determine $\varphi_{\pi}$, one must locate the poles of the  gamma factors $\gamma(s, \pi \times \tau, \psi)$ where $\tau$ ranges over the supercuspidal representations of various general linear groups.  The location of poles will determine the isobaric constituents of a specific functorial lift $\Pi$ to a general linear group $GL$, the lift corresponding to the standard $L$-homomorphism ${}^L G \rightarrow {}^L GL$ (see \cite[\S2]{ACS14}).  In the case of simple supercuspidal representations $\pi$ of a classical group $G$, it is expected that the isobaric constituents of $\Pi$ are either tamely ramified characters of $GL_1$, or simple supercuspidal representations (for $p$ large, this can be seen from the so-called ``depth-preserving conjecture''; see \S\ref{depthpreservation}).  We can then use the explicit local Langlands correspondence for simple supercuspidal representations of general linear groups (see \cite{AL14, BH10, BH13}) to determine $\varphi_{\pi}$.

While describing the Langlands correspondence explicitly is in general a difficult task, in the simple supercuspidal setting it turns out to be tractable.  To explain our approach, we first make some observations.  If $\pi$ is a simple supercuspidal representation of a classical group, it is expected (as remarked earlier) that its Langlands parameter $\varphi_{\pi}$ decomposes as an irreducible subrepresentation $\varphi_{\pi,1}$ (that corresponds to a simple supercuspidal representation of a general linear group under LLC) and a (possibly empty) direct sum $\varphi_{\pi,2}$ of tamely ramified one-dimensional subrepresentations.  Using this prediction as our guide, we can describe our method to fully determine $\varphi_{\pi}$ in two steps.  The first is to compute the gamma factor $\gamma(s, \pi \times \tau, \psi)$, where $\tau$ is any tamely ramified character of $GL_1$.  This computation will determine $\varphi_{\pi,2}$.  By multiplicativity of gamma factors, it will also yield a prediction for a specific simple supercuspidal representation $\sigma$ (whose associated Langlands parameter is $\varphi_{\pi,1}$) that is an isobaric constituent of $\Pi$, leading to step two.  In step two, one computes $\gamma(s, \pi \times \sigma, \psi)$ in order to prove that $\sigma$ is indeed such a constituent.  We can then use LLC for simple supercuspidal representations of general linear groups to determine $\varphi_{\pi,1}$.

We now give more details on these steps in the case of $G = SO_{2\ell+1}$, which is the central concern of this paper.  The simple supercuspidal representations of $G$ are parameterized by two pieces of data: a choice of a uniformizer $\varpi$ in $F$, and a sign.  More explicitly (see \S\ref{simplesupercuspidalsoddorthogonal}), let $\chi$ be an affine generic character of the pro-unipotent radical $I^+$ of an Iwahori $I$.  The choice of a uniformizer $\varpi$ in $F$ determines an element $g_{\chi}$ in $G$ which normalizes $I$ and stabilizes $\chi$.  We can extend $\chi$ to $K = \langle g_{\chi} \rangle I^+$ in two different ways, since $g_{\chi}^2 = 1$, and $\pi = \mathrm{Ind}_K^G \chi$ is simple supercuspidal.

Let $\tau$ be a tamely ramified character of $GL_1$.  Our main result (Theorem \ref{gammafactor}) is

\begin{thm}\label{maintheorem}
\[
\gamma(s, \pi \times \tau, \psi) = \chi(g_{\chi}) \tau(-\varpi) q^{1/2 - s}.
\]
\end{thm}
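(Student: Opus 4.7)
The plan is to evaluate both sides of the functional equation
$\gamma(s,\pi\times\tau,\psi)\,\Phi(W,f_s)=\Phi^*(W,f_s)$
on an explicit pair $(W,f_s)$ and read off the gamma factor as a ratio. Because $\tau$ is a character of $GL_1$, both the flat section $f_s$ and the intertwining operator $M^*(\tau,s)$ are built from one-dimensional integral transforms, which is what makes an explicit computation tractable.

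The first key step is to produce an explicit Whittaker function $W\in\mathcal{W}(\pi,\psi)$. Since $\pi=\mathrm{Ind}_K^G\chi$ with $K=\langle g_\chi\rangle I^+$, one obtains a canonical Whittaker vector by projecting the generator of $\pi$ onto the $(\psi,U)$-isotypic component, where $U$ is the unipotent radical of the standard Borel. The affine-generic condition on $\chi$ forces $W$ to be supported on a very thin subset of the Iwasawa decomposition, essentially $Z\cdot g_\chi^{\mathbb{Z}}\cdot I^+$, with values on this support determined directly by $\chi(g_\chi)$ and by $\chi|_{I^+}$. This support calculation is the geometric heart of the argument and ultimately is what produces the monomial form of the answer.

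Next, I choose a flat section $f_s$ concentrated near a Bruhat cell that meets the support of $W$. The integral $\Phi(W,f_s)$ then reduces to a volume over $I^+$ times an orthogonality sum of $\chi$ against itself, giving a normalizable nonzero constant. For $\Phi^*(W,f_s)=\Phi(W,M^*(\tau,s)f_s)$, I compute $M^*(\tau,s)f_s$ as an explicit one-dimensional integral transform and obtain a section now supported on the opposite Bruhat cell; unfolding and then changing variables by the long Weyl element moves the uniformizer into the argument of $\tau$ to yield $\tau(-\varpi)$, the modulus character contributes $q^{1/2-s}$, and the way $g_\chi$ translates $W$ between the two cells produces the factor $\chi(g_\chi)$. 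Taking the ratio of the two evaluations gives the claimed monomial.

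The main obstacle is less any single calculation than careful bookkeeping. Measure normalizations must be matched across the Rankin-Selberg integral, the induced model, and $M^*(\tau,s)$ (which itself carries a hidden gamma-type normalization), and Weyl-group and sign twists must be tracked precisely in order to recover $\tau(-\varpi)$ rather than merely $\tau(\varpi)$. Conceptually, the single most delicate point is proving the support lemma for $W$ that pins down the nontrivial Bruhat double coset; once it is in hand, everything else becomes a finite Gauss-sum-type computation of character orthogonality.
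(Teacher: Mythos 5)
Your high-level outline --- pick an explicit Whittaker vector $W$ supported on $U_{SO_{2\ell+1}}\langle g_\chi\rangle I^+$, pick an explicit flat section $f_s$, evaluate both $\Phi(W,f_s)$ and $\Phi^*(W,f_s)$, and read off $\gamma$ as the ratio --- is exactly the strategy the paper follows, and you correctly flag the support lemma for $W$ as the crux. The Whittaker function the paper uses is literally the one you describe (extend $\psi\cdot\chi_\zeta$ by zero off $U_{SO_{2\ell+1}}\langle g_\chi\rangle I^+$), and the section is the obvious one, $f_s(h,a)=|\det h|^{s-1/2}\tau(ah)$ on $SO_2\cong GL_1$, so no Bruhat concentration is even needed for $n=1$.

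Where your account of the mechanism goes off track is in the computation of $\Phi^*$. You attribute the $\tau(-\varpi)$ and the cell shift to the intertwining operator $M^*(\tau,s)$, computed as a one-dimensional integral transform with a long-Weyl-element change of variables. But for $n=1$ the opposite unipotent $\overline{U_1}\subset SO_2$ is trivial and the Weyl element $w_1$ is the identity, so $M(\tau,s)f_s=f_s$ identically; there is nothing to unfold. The normalizing factor $\gamma(2s-1,\tau,\wedge^2,\psi)$ in $\Phi^*$ is also trivial because $\wedge^2$ of a $GL_1$-character is trivial. The actual source of the uniformizer and the double-coset jump is Soudry's precise formula for $\Phi^*$, which applies the conjugating elements $\hat{c}_{1,\ell}$, $\delta_o$, $\omega'$ to the argument of $W$ before integrating. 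The support lemma for $W(\hat{c}_{1,\ell}\overline{x}\,j_{1,\ell}(h)\,\delta_o\,\omega')$ then forces $a^{-1}\in\varpi(1+\mathfrak{p})$ (rather than $a\in 1+\mathfrak{p}$ as in $\Phi$) and forces the argument into the coset $U_{SO_{2\ell+1}}\,g_\chi\, I^+$; evaluating $f_s(h^{-1},1)$ on that locus produces $q^{1/2-s}\tau(-\varpi)$, and evaluating $W$ there produces $\chi_\zeta(g_\chi)$. If you follow your plan as written and try to extract these factors from $M^*$ you will get nothing and be left puzzled; you need to return to the explicit $\Phi^*$-integrand in \cite{Sou93} and push the conjugating elements through the support argument instead. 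Minor slip: the support of $W$ is $U_{SO_{2\ell+1}}\langle g_\chi\rangle I^+$, not $Z\langle g_\chi\rangle I^+$ (the center of $SO_{2\ell+1}$ is trivial).
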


In particular, since there are no poles in $\gamma(s, \pi \times \tau, \psi)$, there can be no tamely ramified character of the Weil group $W_F$ appearing as a summand of $\varphi_{\pi}$.  In particular, it is therefore expected that $\varphi_{\pi}$ is irreducible. Therefore, on the one hand, the computation of $\gamma(s, \pi \times \tau, \psi)$ determines no part of $\varphi_{\pi}$.  On the other hand, it does tell us precisely what the Langlands parameter should be, as follows.

By the results of \cite{AL14, BH13} (see in particular \cite[Remark 3.18]{AL14}), there exists a unique irreducible $2\ell$-dimensional representation $\varphi : W_F \rightarrow GL(2\ell,\mathbb{C})$ with trivial determinant, whose gamma factor $\gamma(s, \varphi \times \tau, \psi)$ equals $\chi(g_{\chi}) \tau(-\varpi) q^{1/2 - s}$ for every tamely ramified character $\tau$ (we give more details on this in \S\ref{whittakersection}).  The Langlands parameter $\varphi$ has  been explicitly described in \cite{AL14, BH10, BH13}.  In the case that $p \nmid 2\ell$, let us briefly recall this description (here we follow \cite{AL14}).

Let $\varpi$ be a uniformizer of $F$, let $\varpi_E$ be a $2\ell^{\mathrm{th}}$ root of $\varpi$, and set $E = F(\varpi_E)$.  Relative to the basis
$$\varpi_E^{2\ell-1}, \varpi_E^{2\ell-2}, \cdots, \varpi_E, 1$$
of $E/F$, we have an embedding $$\iota : E^{\times} \hookrightarrow GL_{2\ell}.$$ Define a character $\xi$ of $E^{\times}$ by setting $\xi|_{1 + \mathfrak{p}_E} = \lambda \circ \iota$, where $\lambda(A) := \psi(A_{12} + A_{23} + ... + A_{2\ell-1,2\ell} + \frac{1}{\varpi} A_{2\ell,1})$, for $A = (A)_{ij} \in GL_{2\ell}$.  Moreover, we define $\xi|_{k_F^{\times}} \equiv (\varkappa_{E/F}|_{k_F^{\times}})^{-1}$, where $\varkappa_{E/F} = \mathrm{det}(\Ind_{W_E}^{W_F}(1_E))$, $1_E$ denotes the trivial character of $W_E$, and $k_F$ is the residue field of $F$.  Finally, we set $$\xi(\varpi_E) = \chi(g_{\chi}) \lambda_{E/F}(\psi)^{-1},$$ where $\lambda_{E/F}(\psi)$ is the Langlands constant (see \cite[\S34.3]{BH06}).  Then $\varphi = \mathrm{Ind}_{W_E}^{W_F} \xi$.  The proof that $\varphi_{\pi} = \varphi$ boils down to the computation of $\gamma(s, \pi \times \Pi, \psi)$, where $\Pi$ is the simple supercuspidal representation of $GL_{2\ell}$ corresponding to $\varphi$ under LLC, and will be carried out in future work. This is our ``step two'' described above.  On the other hand, if we assume the ``depth-preserving conjecture'', we can easily prove (see Corollary \ref{functoriallift}) that $\varphi_{\pi} = \varphi$ for $p$ sufficiently large.

The cases of $SO_{2\ell}$ and $Sp_{2\ell}$ are in some sense similar.  For example, suppose that $G = SO_{2\ell}$.  Let $\pi$ be a simple supercuspidal representation of $G$, and let $\varphi_{\pi}$ be its Langlands parameter.  Again, it is expected that $\varphi_{\pi}$ decomposes as an irreducible subrepresentation $\varphi_{\pi,1}$ (that corresponds to a simple supercuspidal representation under LLC) and a (possibly empty) direct sum $\varphi_{\pi,2}$ of tamely ramified one-dimensional subrepresentations (again, for $p$ large, this can be seen from the ``depth-preserving conjecture'').  It is known that all summands must be self-dual.  Therefore the computation of $\gamma(s, \pi \times \tau, \psi)$, where $\tau$ is a quadratic and tamely ramified character of $GL_1$ will determine $\varphi_{\pi,2}$.  The computation of $\gamma(s, \pi \times \tau, \psi)$, where $\tau$ is tamely ramified and not quadratic, will give a specific prediction (as in the case of $SO_{2\ell+1}$) for $\varphi_{\pi,1}$.  Then one must perform ``step two'' by calculating $\gamma(s, \pi \times \Pi, \psi)$, where $\Pi$ is the unique simple supercuspidal representation (with trivial central character) of a certain general linear group such that $\gamma(s, \Pi \times \tau, \psi) = \gamma(s, \pi \times \tau, \psi)$ for all tamely ramified characters $\tau$ of $GL_1$.

We now summarize the contents of the sections of this paper.  In \S\ref{preliminaries}, we recall the functional equation for odd orthogonal groups as in \cite{Sou93}.  In \S\ref{whittakersection}, we recall a construction of simple supercuspidal representations of $GL_n$ as well as their tamely ramified $GL_1$-twisted gamma factors.  In \S\ref{simplesupercuspidalsoddorthogonal}, we give a construction of simple supercuspidal representations of odd orthogonal groups that may be used in computing the Rankin-Selberg integrals from \S\ref{preliminaries}. In \S\ref{easyside} and \S\ref{hardside}, we compute a family of Rankin-Selberg integrals that allow us to explicitly compute the tamely ramified $GL_1$-twisted gamma factors of a simple supercuspidal representation of $SO_{2\ell+1}$.  The main result on the values of these gamma factors is Theorem \ref{gammafactor}, which allows us to predict the Langlands parameter of a simple supercuspidal representation of an odd orthogonal group.  Finally, in \S\ref{depthpreservation}, we show that if we assume a conjecture on depth preservation, then our prediction for the Langlands parameter is indeed the correct Langlands parameter.

\subsection*{Acknowledgements}
This paper has benefited from conversations with Benedict Gross, Tasho Kaletha, and Eyal Kaplan.  We thank them all.

\section{Notation}\label{notation}

Let $F$ be a $p$-adic field, with ring of integers $\mathfrak{o}$, maximal ideal $\mathfrak{p}$, $k_F = \frak{o} / \frak{p}$, let $q$ denote the cardinality of $k_F$, and fix a uniformizer $\varpi$.
We normalize measures $dv$ and $d^{\times} v$ on $F$ and $F^{\times}$ by taking $vol(\frak{o}) = q^{\frac{1}{2}}$ and
$vol(\frak{o}^{\times}) = 1$.  Throughout, we fix a nontrivial additive character $\psi : F \rightarrow \mathbb{C}^{\times}$ that is trivial on $\mathfrak{p}$ but nontrivial on $\mathfrak{o}$.  Given a connected reductive group $\mathbf{G}$ defined over $F$, let $G=\mathbf{G}(F)$.  We also let $W_F$ denote the Weil group of $F$.

\section{The local functional equation for odd orthogonal groups}\label{preliminaries}
In this section we recall the functional equation for odd orthogonal groups, as in \cite[\S1.3, \S9.6, \S10.1]{Sou93}.\footnote{There is actually a typo in \cite{Sou93} as observed in \cite[Remark 3.1]{K14ii}.  However, this does not affect our present work (see Remark \ref{soudrytypo}).}

Let $J_\ell$ denote the $\ell \times \ell$ matrix
$\begin{pmatrix}
 & & 1\\
 & \Ddots & \\
 1 & &
\end{pmatrix}$.  We define
\begin{equation}\label{SOodddefinitions}
SO_{2\ell+1} = \left\{ g \in GL_{2\ell+1} : det(g) = 1, {}^t g
J_{2\ell+1}
g =
J_{2\ell+1}
\right\},
\end{equation}
\begin{equation}\label{SOevendefinition}
SO_{2\ell} =
\left\{ g \in GL_{2\ell} : det(g) = 1, {}^t g
J_{2\ell}
g =
J_{2\ell}
\right\}.
\end{equation}
Let $T_{SO_{2\ell}}$ be the split maximal torus of $SO_{2\ell}$.  Let $\Delta_{SO_{2\ell}}$ be the standard set of simple roots of $SO_{2\ell}$, so that $\Delta_{SO_{2\ell}} = \{ \epsilon_1 - \epsilon_2, ..., \epsilon_{\ell-1} - \epsilon_\ell,  \epsilon_{\ell-1} + \epsilon_\ell \}$, where $\epsilon_i(t) = t_i$ is the $i$-th coordinate function of $t \in T_{SO_{2\ell}}$.  Let $Q_{\ell}$ be the standard maximal parabolic subgroup of $SO_{2\ell}$ corresponding to $\Delta \setminus \{  \epsilon_{\ell-1} + \epsilon_\ell \}$.  The Levi part of $Q_{\ell}$ is isomorphic to $GL_\ell$, and we denote its unipotent radical by $U_{\ell}$.

Let $T_{SO_{2\ell+1}}$ be the split maximal torus of $SO_{2\ell+1}$.  let $\Delta_{SO_{2\ell+1}}$ be the standard set of simple roots of $SO_{2\ell+1}$, so that $\Delta_{SO_{2\ell+1}} = \{ \epsilon_1 - \epsilon_2, ..., \epsilon_{\ell-1} - \epsilon_\ell,  \epsilon_\ell \}$.  The highest root is $\epsilon_1 + \epsilon_2$.  Let $U_{SO_{2\ell+1}}$ denote the subgroup of upper triangular unipotent matrices.

Let $\psi$ be a nontrivial additive character of $F$.
We let $U_{GL_\ell}$ denote the maximal unipotent subgroup of $GL_\ell$ consisting of upper triangular matrices.  We also denote by $\psi$ the standard nondegenerate Whittaker character of
\[
\psi(u) = \psi \left( \displaystyle\sum_{i=1}^{\ell-1} u_{i,i+1} \right),
\]
for $u = (u_{ij}) \in U_{GL_\ell}$.

Let $\tau$ be a generic representation of $GL_\ell$.  We denote by $\mathcal{W}(\tau, \psi)$ the Whittaker model of $\tau$ with respect to $\psi$.

For $s \in \mathbb{C}$, define
\[
V_{Q_\ell}^{SO_{2\ell}}(\mathcal{W}(\tau, \psi),s) = \mathrm{Ind}_{Q_\ell}^{{SO}_{2\ell}} (\mathcal{W}(\tau, \psi) |\mathrm{det}|^{s-1/2} ).
\]
Thus, a function $f_{s}$ in $V_{Q_\ell}^{SO_{2\ell}}(\mathcal{W}(\tau, \psi),s)$ is a smooth function on $SO_{2\ell} \times GL_\ell$, where for any $g \in SO_{2\ell}$, $m \in Q_\ell$, and $u$ in the unipotent radical $U_{\ell}$, we have

\begin{equation}\label{sectiondefinition}
f_s((mug, a) = |\mathrm{det}(m)|^{s+\frac{n-2}{2}} f_s(g,am),
\end{equation}
and the mapping $a \mapsto f_s(g,a)$ belongs to $\mathcal{W}(\tau, \psi)$.

We now let $\ell,n \in \mathbb{N}$ such that $\ell \geq n$.  Let $U_{SO_{2\ell+1}}$ denote the subgroup of upper triangular unipotent matrices in $SO_{2\ell+1}$.  We use the notation $\psi$ again to define a non-degenerate character on $U_{SO_{2\ell+1}}$ by
\[
\psi(u) = \psi \left( \displaystyle\sum_{i=1}^{\ell} u_{i,i+1} \right).
\]

Let $\pi$ be a generic representation of $SO_{2\ell+1}$ with respect to $\psi$.  We shall denote by $\mathcal{W}(\pi, \psi)$ the Whittaker model of $\pi$ with respect to $\psi$.  Let $\tau$ be a generic representation of $GL_n$ with respect to $\psi$.

In computing $\gamma$-factors, we will be interested in the following local integrals (see \cite{Sou93}):
\begin{align*}\label{zeta}
\begin{split}
 & \ \Phi(W, f_s)\\
: = & \ \int_{U_{SO_{2n}} \setminus SO_{2n}} \int_{\overline{X}_{(n,\ell)}}
W(\overline{x} j_{n,\ell}(h)) f_s(h, I_n) d \overline{x} dh,
\end{split}
\end{align*}
where $W \in \mathcal{W}(\pi, \psi)$, $f_s \in V_{Q_n}^{SO_{2n}}(\mathcal{W}(\tau, \psi^{-1}),s)$, and where $dh$ is a fixed right-invariant Haar measure on the quotient space $U_{SO_{2n}} \setminus SO_{2n}$, and $d \overline{x}$ is the product measure inherited from $dv$.

Here, $j_{n,\ell} : SO_{2n} \rightarrow SO_{2\ell+1}$ is the map
\[
\begin{pmatrix}
A & B\\
C & D
\end{pmatrix} \mapsto
\begin{pmatrix}
A & & B\\
& I_{2(\ell-n)+1} & \\
C & & D
\end{pmatrix} \in SO_{2\ell+1},
\]
\[
\overline{X}_{(n,\ell)} =
\begin{pmatrix}
I_n & & & & \\
y & I_{\ell-n} & & & \\
& & 1 & &\\
& & & I_{\ell-n} &\\
& & & y' & I_n
\end{pmatrix} \in SO_{2\ell+1}.
\]

Note that $y \in F^{\ell-1}$ is a column vector, and $y' \in F^{\ell-1}$ is a row vector.  Moreover, the notation $y'$ means that these coordinates are determined by the coordinates of $y$, according to the matrix defining $SO_{2\ell+1}$.

Now let $n$ be odd.  We define an intertwining operator by

\[
M(\tau,s) f_s(h,a) = \int_{\overline{U_n}} f_s(\overline{u} w_n^{-1} h, a) d \overline{u},
\]
where
\[
w_n =
\begin{pmatrix}
& I_n \\
I_n &
\end{pmatrix}
\begin{pmatrix}
& & 1\\
& I_{2(n-1)} &\\
1 & &
\end{pmatrix} \in SO_{2n},
\]
and where $\overline{U_n}$ is the opposite unipotent radical to $U_n$.

Then we will also be interested in the integrals (see \cite[\S9.6]{Sou93}) $\Phi^*(W, f_s)$, where
\begin{align*}
\begin{split}
 & \ \Phi^*(W, f_s)\\
: = & \ \gamma(2s-1, \tau, \wedge^2, \psi) \int_{U_{SO_{2n}} \setminus SO_{2n}} \int_{\overline{X}_{(n,\ell)}}
W(\hat{c}_{n,\ell} \overline{x} j_{n,\ell}(h) \delta_o \omega') M(\tau,s) f_s({}^{\omega} h, b_n^*) d \overline{x} dh.
\end{split}
\end{align*}
Here, if $g \in GL_n$, then $g^* = J_n {}^{t} g^{-1} J_n$.  Moreover,
\begin{align*}
\begin{split}
& \ \hat{c}_{n,\ell} = diag(I_n, -I_{\ell-n},1,-I_{\ell-n},I_n) \in SO_{2\ell+1},\\
& \ \delta_o = diag(I_\ell,-1,I_\ell),\\
& \ b_n = diag(1,-1,...,-1,1) \in GL_n,
\end{split}
\end{align*}
\[
\omega' =
\begin{pmatrix}
I_{n-1} & & & &\\
& & & 1 &\\
& & I_{2(\ell-n)+1} & &\\
& 1 & & & \\
& & & & I_{n-1}
\end{pmatrix},
\]
\[
\omega =
\begin{pmatrix}
I_{n-1} & & &\\
& 0 & 1 &\\
& 1 & 0 &\\
& & & I_{n-1}
\end{pmatrix}.
\]
We then have the functional equation for odd orthogonal groups.
\begin{thm}\cite[\S10.1]{Sou93}
\begin{equation}
\gamma(s, \pi \times \tau, \psi) \Phi(W,f_s) = \Phi^*(W,f_s). \label{functionalequation}
\end{equation}
\end{thm}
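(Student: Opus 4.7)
The plan is to establish the functional equation along the classical Rankin–Selberg template, which proceeds by combining (i) convergence in disjoint half-planes, (ii) meromorphic continuation as a rational function of $q^{-s}$, and (iii) a uniqueness theorem for equivariant bilinear forms to identify the two sides up to a scalar.

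First I would check that $\Phi(W, f_s)$ converges absolutely in some right half-plane $\Re(s) \gg 0$. Using the Iwasawa decomposition of $SO_{2n}$ with respect to its Borel, one writes $dh$ in terms of a torus integral and a compact integral. Standard gauge estimates for Whittaker functions on $SO_{2\ell+1}$ (essentially, Jacquet's bounds coming from admissibility and the Jacquet module being finite-dimensional) control $W(\bar x j_{n,\ell}(h))$ on the torus, and the factor $|\det|^{s + (n-2)/2}$ from (\ref{sectiondefinition}) together with the Whittaker realization of $\tau$ absorbs the remaining growth once $\Re(s)$ is sufficiently large. For the $\bar{X}_{(n,\ell)}$-integral one uses the rapid decay of $W$ on unipotent subgroups (by standard smooth/compactly-supported-on-the-torus type arguments) to ensure absolute convergence uniformly in a vertical strip. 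A parallel analysis, together with the known convergence of the intertwining operator $M(\tau,s)$ for $\Re(s) \ll 0$, establishes absolute convergence of $\Phi^*(W,f_s)$ in a left half-plane.

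Next I would show both $\Phi$ and $\Phi^*$ are rational in $q^{-s}$ and hence admit meromorphic continuation to all of $\BC$. This is a Bernstein-type argument: the integrals are, on restriction to a lattice of sections, linear combinations of finitely many basic integrals whose $s$-dependence is of the form $p(q^{-s})/q(q^{-s})$ for fixed polynomials $p,q$. The heart of the proof is then the uniqueness theorem: I would show that, outside a discrete set of $s$, the space of continuous bilinear forms
\[
B \colon \mathcal{W}(\pi,\psi) \times V_{Q_n}^{SO_{2n}}(\mathcal{W}(\tau,\psi^{-1}),s) \longrightarrow \BC
\]
satisfying the appropriate equivariance under the diagonal $SO_{2n}$-action (coming from $SO_{2n} \hookrightarrow SO_{2\ell+1}$ via $j_{n,\ell}$ and from the induced action on $V_{Q_n}^{SO_{2n}}$) together with equivariance under the ``Heisenberg'' unipotent $\bar X_{(n,\ell)}$ against the character induced by $\psi$, is at most one-dimensional. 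The standard route is a double-coset / orbit analysis: one filters the space of bilinear forms using the Bruhat decomposition of $SO_{2\ell+1}$ under the subgroup $j_{n,\ell}(SO_{2n}) \cdot \bar X_{(n,\ell)}$, and one shows vanishing on all orbits except a single open one, on which the form is determined up to scalar.

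Granted uniqueness, both sides of (\ref{functionalequation}) define, for generic $s$, equivariant bilinear forms of the required type: the integral $\Phi(W,f_s)$ directly, and $\Phi^*(W,f_s)$ via the fact that $M(\tau,s) f_s$ lies in the corresponding induced space attached to $\tilde\tau$ composed with the appropriate Weyl element (here the factor $\gamma(2s-1,\tau,\wedge^2,\psi)$ in the definition of $\Phi^*$ is the precise normalization needed to absorb the local coefficient arising from $M(\tau,s)$, cf.\ \cite[Remark 3.1]{K14ii}). By uniqueness they must be proportional; one then \emph{defines} $\gamma(s,\pi\times\tau,\psi)$ to be the ratio, and the content of Theorem (\ref{functionalequation}) is that this scalar is independent of the choice of $W$ and $f_s$ and depends meromorphically on $s$. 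The main obstacle is the uniqueness theorem: the double-coset analysis for the embedding $j_{n,\ell}$ together with $\bar X_{(n,\ell)}$ is delicate and constitutes the bulk of the technical work in \cite{Sou93} (and is what is later refined in \cite{K14ii}); convergence and meromorphic continuation, by contrast, are essentially bookkeeping once the uniqueness is in hand.
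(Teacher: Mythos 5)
The paper does not prove this theorem: it is imported directly from Soudry's memoir \cite[\S 10.1]{Sou93}, and your outline --- absolute convergence of $\Phi$ and $\Phi^*$ in opposite half-planes, rationality in $q^{-s}$ via Bernstein's principle, multiplicity one for the equivariant bilinear forms established by a Bruhat double-coset analysis, and proportionality defining $\gamma(s,\pi\times\tau,\psi)$ --- is a faithful description of the strategy actually carried out in that reference. The only caveat is that the uniqueness theorem, which is the genuine mathematical content, is asserted rather than proved in your sketch; but since the paper itself treats the entire statement as a citation, your proposal is consistent with (and more detailed than) what the paper provides.
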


\begin{rmk}\label{soudrytypo}
In \cite[Remark 3.1]{K14ii} it was observed that there is a typo in \cite{Sou93} having to do with the Whittaker space for $\pi$.  However, as noted in the same remark, this typo only needs to be fixed in the case that $l < n$.  Since we are at present only considering $n = 1$, we may still follow the notation and integrals from \cite{Sou93}.
\end{rmk}

\section{The simple supercuspidal representations of $GL_n$}\label{whittakersection}
In this section we review the definition of a simple supercuspidal representation of $GL_n$, as in \cite{AL14} (see also \cite{KL15}). We then recall the values of a family of its twisted gamma factors, in order to predict the Langlands parameter of a simple supercuspidal of $SO_{2\ell+1}$.

Let $GL_n = \mathbf{GL_n}(F)$, $Z$ the center of $G$, $K = \mathbf{GL_n}(\mathfrak{o})$ the standard maximal compact subgroup, $I$ the standard Iwahori subgroup, and $I^+$ its pro-unipotent radical.  Fix a nontrivial additive character $\psi$ of $F$ that is trivial on $\mathfrak{p}$ and nontrivial on $\mathfrak{o}$.  Fix a uniformizer $\varpi$ in $F$.  Let $U$ denote the unipotent radical of the standard upper triangular Borel subgroup of $GL_n$. For any $u \in U$, we recall from \S\ref{preliminaries} the standard nondegenerate Whittaker character $\psi$ of $U$:
\[
\psi(u) = \psi \left( \displaystyle\sum_{i = 1}^{n-1} u_{i,i+1} \right).
\]
Set $H = ZI^+$, and fix a character $\omega$ of $Z \cong F^{\times}$, trivial on $1 + \mathfrak{p}$.  For $t \in \mathfrak{o}^{\times} / (1 + \mathfrak{p})$, we define a character $\chi : H \rightarrow \mathbb{C}^{\times}$ by $\chi(zk) = \omega(z) \psi(r_1 + r_2 ... + r_{n-1} + r_n)$ for
\[
k = \left( \begin{array}{ccccc}
x_1 & r_1 & * &  \cdots & \\
* & x_2 & r_2 & \cdots & \\
\vdots & & \ddots & \ddots & \\
* & &  &  & r_{n-1}\\
\varpi r_n &  & \cdots &  & x_n
\end{array} \right)
\]

The compactly induced representation $\pi_{\chi} := \mathrm{cInd}_H^G \chi$ is then a direct sum of $n$ distinct irreducible supercuspidal representations of $GL_n$. They are parameterized by $\zeta$, where $\zeta$ is a complex $n^{\mathrm{th}}$ root of $\omega( \varpi)$, as follows.  Set

\[
g_{\chi} = \left( \begin{array}{ccccc}
0 & 1 &  &   & \\
 &  & 1 &  & \\
 & & & \ddots & \\
 & &  &  & 1\\
\varpi  &  &  &  & 0
\end{array} \right)
\]
and set $H' = \langle g_{\chi} \rangle H$.  Then the summands of $\pi_{\chi}$ are $$\sigma_{\chi}^{\zeta} := \mathrm{cInd}_{H'}^G \chi_{\zeta}$$ where $\chi_{\zeta}(g_{\chi}^j h) = \zeta^j \chi(h)$, as $\zeta$ runs over the complex $n^{\mathrm{th}}$ roots of $\omega(\varpi)$.  The $\sigma_{\chi}^{\zeta}$ are called the \emph{simple supercuspidal} representations of $GL_n$.

Finally, for the simple supercuspidal representation $\sigma_{\chi}^{\zeta}$, we may define a Whittaker function on $GL_n$ by setting

\begin{equation*}
W(g) = \left\{
\begin{array}{rll}
\psi(u) \chi_{\zeta}(h') & \text{if} & g = uh' \in U H'\\
0 &  & \text{else}
\end{array} \right.
\end{equation*}
This function is well-defined, by definition of $\psi$ and $\chi_{\zeta}$.  Using this Whittaker function, it turns out to be not so difficult to compute
the Rankin-Selberg integrals that arise in the definition of the twisted gamma factors $\gamma(s, \sigma_{\chi}^{\zeta} \times \tau, \psi)$, where $\tau$ is a tamely ramified character of $F^{\times}$.  In \S\ref{simplesupercuspidalsoddorthogonal}, we will define an analogous Whittaker function for a simple supercuspidal representation $\pi$ of $SO_{2\ell+1}$, which will allow us to compute the relevant Rankin-Selberg integrals for $\pi$.

In \cite{AL14}, we explicitly computed $\gamma(s, \sigma_{\chi}^{\zeta} \times \tau, \psi)$:

\begin{lem}\cite[Lemma 3.14]{AL14}\label{simplesupercuspidalgammagln}
Let $\tau$ be a tamely ramified character of $GL_1$.  Then
\[
\gamma(s, \sigma_{\chi}^{\zeta} \times \tau, \psi) = \tau(-1)^{n-1} \tau(\varpi) \chi(g_{\chi}) q^{1/2 - s}
\]
\end{lem}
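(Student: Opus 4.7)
The plan is to evaluate the gamma factor via an explicit Rankin--Selberg integral for $GL_n \times GL_1$, exploiting the very simple form of the Whittaker function $W$ of $\sigma_\chi^\zeta$ (supported on $UH' = U \langle g_\chi \rangle ZI^+$ with a transparent formula there). The argument has three main steps.

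First I would appeal to the local functional equation of Jacquet--Piatetski-Shapiro--Shalika, which relates two Mellin-type integrals $\Psi(s, W, \tau)$ and $\Psi(1-s, \wt W, \tau^{-1})$ by a factor involving $\gamma(s, \sigma_\chi^\zeta \times \tau, \psi)$, where
\[
\Psi(s, W, \tau) = \int_{F^\times} W\!\begin{pmatrix} a & \\ & I_{n-1}\end{pmatrix} \tau(a) |a|^{s - (n-1)/2} \, d^\times a
\]
and $\wt W$ is a Whittaker function for the contragredient $\sigma_\chi^{\zeta^{-1}}$. The task then reduces to evaluating each of these integrals on the nose.

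Second, I would evaluate $\Psi(s, W, \tau)$. Setting $t_a := \diag(a, I_{n-1})$, the Iwahori--Bruhat decomposition of $GL_n$ shows that $t_a$ lies in the support $UH'$ only when $a \in 1 + \mathfrak p$, in which case $W(t_a) = \chi(t_a) = 1$. The reason is that the $n$ representatives $g_\chi^j$ $(0 \le j \le n-1)$ of $H'/ZI^+$ lie in pairwise distinct cells of the affine Bruhat decomposition, and only the trivial cell ($j = 0$) meets the Bruhat cell of the diagonal element $t_a$. Consequently $\Psi(s, W, \tau)$ collapses to $\vol(1 + \mathfrak p)$, a nonzero constant independent of $s$.

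The heart of the proof --- and its main obstacle --- is the analogous evaluation of $\Psi(1-s, \wt W, \tau^{-1})$. Here one must locate the Bruhat cell of the relevant support element and carry out an explicit decomposition into $u \cdot g_\chi^j \cdot z \cdot k$ with $u \in U$, $z \in Z$, $k \in I^+$. After carefully tracking the resulting central character $\omega(z)$, the cocycle factor $\zeta^j$, and the $\psi$-contributions from the entries $r_i(k)$ of the Iwahori factor $k$ that feed into $\chi$, the dual integral collapses to a tame Gauss sum of the shape
\[
\int_{\mathfrak o^\times} \tau^{-1}(u) \psi(c u) \, d^\times u
\]
for an explicit $c \in F^\times$. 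For tamely ramified $\tau$, this evaluates in closed form to a factor of modulus $q^{-1/2}$ times $\tau(c^{-1}) \tau(-1)^{n-1}$. Combining this with the cocycle relation $\zeta^n = \omega(\varpi)$ (which rewrites the accumulated $\zeta$-powers in terms of $\chi(g_\chi)$) and taking the ratio with $\Psi(s, W, \tau)$ yields
\[
\gamma(s, \sigma_\chi^\zeta \times \tau, \psi) = \tau(-1)^{n-1} \, \tau(\varpi) \, \chi(g_\chi) \, q^{1/2 - s},
\]
as claimed.
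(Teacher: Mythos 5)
Your overall strategy is the right one and, as far as I can tell, is exactly the strategy of \cite[Lemma 3.14]{AL14}: invoke the Jacquet--Piatetski-Shapiro--Shalika $GL_n\times GL_1$ local functional equation and evaluate both Mellin integrals directly from the explicit formula for the Whittaker function $W$ supported on $U\langle g_\chi\rangle ZI^+$. Your evaluation of $\Psi(s,W,\tau)$ (support forces $a\in 1+\mathfrak p$, hence a constant) is correct and is exactly parallel to Lemma \ref{supportofW} and Corollary \ref{easysidecomputation} in the $SO_{2\ell+1}$ case.

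The step you identify as ``the heart of the proof'' is, however, both glossed over and almost certainly misdescribed, in two ways. First, the claim that the dual integral collapses to a genuine tame Gauss sum $\int_{\mathfrak o^\times}\tau^{-1}(u)\psi(cu)\,d^\times u$ is not right: if such a Gauss sum survived with $\tau$ ramified and $v(c)=0$ it would produce an honest Gauss-sum phase, not the clean product $\tau(c^{-1})\tau(-1)^{n-1}$ you assert (a tame Gauss sum does not have that evaluation). What actually happens --- analogous to Lemma \ref{whittakersupport} in the $SO_{2\ell+1}$ computation --- is that the support of the translated Whittaker function $\rho(w_{n,1})\wt W$ at the elements $\diag(a,I_{n-1})$ pins $a$ to a \emph{single} coset $\varpi^{-1}(1+\mathfrak p)$, and after the inner $F^{n-2}$-integration is likewise pinned to a neighborhood of the identity, the integral evaluates pointwise: one reads off $\tau(\varpi)$ from $\tau^{-1}(a)$, a power of $q$ from $|a|^{(1-s)-(n-1)/2}$, and $\chi_\zeta(g_\chi)=\zeta$ from the value of $W$ at the relevant cell of $U\,g_\chi\,ZI^+$. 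No Gauss sum arises, and the absence of one is precisely why the answer is a monomial. Second, the factor $\tau(-1)^{n-1}$ does not come out of the integral at all: it is the normalizing factor $\omega_\tau(-1)^{n-1}$ already present in the JPSS local functional equation for $GL_n\times GL_1$. You should be citing it from the functional equation rather than trying to extract it from an integral evaluation. Finally, note the statement's ``$\chi(g_\chi)$'' is an abuse of notation for $\chi_\zeta(g_\chi)=\zeta$ (with $\zeta^n=\omega(\varpi)$), since $\chi$ itself is only defined on $ZI^+$; your ``cocycle relation'' remark is pointing at this, but the quantity that actually enters is just $\zeta$, not anything that needs to be ``rewritten.''
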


This result allows us to conclude (see \cite[Remark 3.18]{AL14}) that a simple supercuspidal of $GL_n$, up to central character, can be distinguished from any other supercuspidal of $GL_n$ by the above twisted $\gamma$-factors.

\section{The simple supercuspidal representations of $SO_{2\ell+1}$}\label{simplesupercuspidalsoddorthogonal}

In this section we construct the simple supercuspidal representations of $SO_{2\ell+1}$, and define an explicit Whittaker function for each simple supercuspidal.

Let $T = T_{SO_{2\ell+1}}$ denote the split maximal torus of $SO_{2\ell+1}$. Associated to $T$ we have the set of affine roots $\Psi$. Let $X^*(T)$ denote the character lattice of $T$, let $T_0$ be the maximal compact subgroup of $T_0$, and set $$T_1 = \langle t \in T_0 : \lambda(t) \in 1 + \mathfrak{p} \ \forall \lambda \in X^*(T) \rangle.$$

To each $\alpha \in \Psi$ we have an associated affine root group $U_{\alpha}$ in $G$.   Associated to the standard set of simple roots $\Delta_{SO_{2\ell+1}}$ we have a set of simple affine $\Pi$ and positive affine roots $\Psi^+$.  Set $$I = \langle T_0, U_{\alpha} : \alpha \in \Psi^+ \rangle,$$ $$I^+ = \langle T_1, U_{\alpha} : \alpha \in \Psi^+ \rangle,$$  $$I^{++} = \langle T_1, U_{\alpha} : \alpha \in \Psi^+ \setminus \Pi \rangle.$$
In terms of the Moy-Prasad filtration, if $x$ denotes the barycenter of the fundamental alcove, then $I = G_x = G_{x,0}, I^+ = G_{x+} = G_{x,\frac{1}{2\ell}}$, and $I^{++} = G_{x,\frac{1}{2\ell}+}$.

For $(t_1, t_2, ..., t_{\ell+1}) \in \mathfrak{o}^{\times} / (1 + \mathfrak{p}) \times \mathfrak{o}^{\times} / (1 + \mathfrak{p}) \times \cdots \times \mathfrak{o}^{\times} / (1 + \mathfrak{p})$, we define a character

$$\chi : I^+ \rightarrow \mathbb{C}^{\times}$$ $$h \mapsto \psi(t_1 h_{12} + t_2 h_{23} + \cdots + t_{\ell-1} h_{\ell-1,\ell} + t_\ell x_{\ell,\ell+1} + t_{\ell+1} \frac{h_{2\ell,1}}{\varpi})$$
for $h = (h)_{ij} \in I^+$.  These $\chi$'s are called the \emph{affine generic characters} of $I^+$ (see \cite{GR10, RY13}).  The orbits of affine generic characters are parameterized by the set of elements in $\mathfrak{o}^{\times} / (1 + \mathfrak{p})$, as follows.  $T \cap \mathbf{SO_{2\ell+1}}(\mathfrak{o})$ normalizes $I^+$, so acts on the set of affine generic characters by conjugation.  Every orbit of affine generic characters contains one of the form $(1,1,...,1,t)$, for $t \in \mathfrak{o}^{\times} / (1 + \mathfrak{p})$.  Specifically, the orbit of $(t_1, t_2, ..., t_{\ell+1})$ contains $(1,1, ..., 1, t)$, where $t = \frac{1}{t_1 t_2^2 t_3^2 \cdots t_\ell^2}$.

Instead of viewing the affine generic characters as parameterized by $t \in \mathfrak{o}^{\times} / (1 + \mathfrak{p})$, we will set $t = 1$ and let the affine generic characters be parameterized by the various choices of uniformizer in $F$.  Since we have already fixed an (arbitrary) uniformizer ahead of time in \S\ref{notation}, we have therefore fixed an affine generic character $\chi$.  The compactly induced representation $\pi_{\chi} := \mathrm{ind}_H^G \chi$ is a direct sum of $2$ distinct irreducible supercuspidal representations of $SO_{2\ell+1}$. They are parameterized $\zeta \in \{-1,1 \}$, as follows.  Set

\[ g_{\chi} = \begin{pmatrix}
 & & \varpi^{-1}\\
 & -I_{2\ell-1} & \\
\varpi & &
\end{pmatrix}.
\]

One can compute that $g_{\chi}$ normalizes $I^+$, and that $\chi(g_{\chi} h g_{\chi}^{-1}) = \chi(h) \ \forall h \in I^+$. Set $H' = \langle g_{\chi} \rangle I^+$.  Since $g_{\chi}^2 = 1$, we may extend $\chi$ to a character of $H'$ in two different ways by mapping $g_{\chi}$ to $\zeta \in \{ -1, 1 \}$.  Then the summands of $\pi_{\chi}$ are the compactly induced representations
$$
\pi_{\chi}^{\zeta} := \mathrm{ind}_{H'}^G \chi_{\zeta}
$$
where $\chi_{\zeta}(g_{\chi} h) = \zeta \cdot \chi(h)$, where $\zeta \in \{ -1,1 \}$ and $h \in I^+$.  The $\pi_{\chi}^{\zeta}$'s are the \emph{simple supercuspidal} representations of $SO_{2\ell+1}$.

To compute the Rankin-Selberg integrals, we need a Whittaker function for $\pi_{\chi}^{\zeta}$.  We note that $\chi_{\zeta}(u) = \psi \left( \sum_{i=1}^r u_{i,i+1} \right)$ for $u \in U \cap I^+$, so the function

\begin{equation}\label{whittakerfunction}
W(g) = \left\{
\begin{array}{rll}
\psi(u) \chi_{\zeta}(g_{\chi}^i) \chi_{\zeta}(k) & \text{if} & g = u g_{\chi}^i k \in U_{SO_{2\ell+1}} \langle g_{\chi} \rangle I^+ \\
0 &  & \text{else}
\end{array} \right.
\end{equation}

lives in $Ind_{U}^G \psi$, and is therefore a Whittaker function for $\pi_{\chi}^{\zeta}$ with respect to $\psi$.

\section{The computation of $\Phi(W, f_s)$}\label{easyside}
In this section, we compute one side of the functional equation \eqref{functionalequation} that defines $\gamma(s, \pi \times \tau, \psi)$, where $\pi$ is a simple supercuspidal representation of $SO_{2\ell+1}$, and $\tau$ is a tamely ramified character of $GL_1$.

We first note that since $\tau$ is a character of $GL_1$, $\Phi(W, f_s)$ simplifies to

\[
\int_{U_{SO_{2}} \setminus SO_{2}} \int_{\overline{X}_{(1,\ell)}}
W(\overline{x} j_{1,\ell}(h)) f_s(h, 1) d \overline{x} dh,
\]

Moreover, note that by \eqref{SOevendefinition},
\[
SO_2 = \left\{
\begin{pmatrix}
z & 0 \\
0 & z^{-1}
\end{pmatrix} : z \in F^{\times}
\right\} \cong GL_1.
\]
We will pass between $SO_2$ and $GL_1$ when convenient.

We now define $f_s \in V_{Q_1}^{SO_{2}}(\mathcal{W}(\tau, \psi^{-1}),s)$ (see \eqref{sectiondefinition}) by
\begin{equation}\label{sectionofprincipalseries}
f_s(h,a) = |\mathrm{det}(h)|^{s-1/2} \tau(ah),
\end{equation}
where $\tau(h)$ and $\mathrm{det}(h)$, mean $\tau$ and $\mathrm{det}$, applied to the upper left entry of $h$.
The map $a \mapsto f(g,a)$ is easily seen to be in the Whittaker model $\mathcal{W}(\tau, \psi^{-1})$ of $\tau$.

Recall the Whittaker function $W$ that we defined earlier (see \eqref{whittakerfunction}) for $\pi_{\chi}^{\zeta} = \mathrm{Ind}_{\langle g_{\chi} \rangle I^+}^{SO_{2\ell+1}} \chi_{\zeta}$.

\begin{lem}\label{supportofW}
If $W(\overline{x} j_{n,\ell}(h)) \neq 0$, then $\overline{x} \in I^{++}$ and $h \in 1 + \mathfrak{p}$.
\end{lem}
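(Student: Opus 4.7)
The plan is to exploit the support of the explicit Whittaker function $W$ defined in \eqref{whittakerfunction}, which is $U \langle g_{\chi} \rangle I^+$ with $U := U_{SO_{2\ell+1}}$. Writing $h = \mathrm{diag}(z, z^{-1})$, so that $j_{1, \ell}(h) = \mathrm{diag}(z, I_{2\ell-1}, z^{-1})$, a direct matrix multiplication shows that the last row of $\overline{x} \, j_{1, \ell}(h)$ equals $(0, \ldots, 0, y'_1, \ldots, y'_{\ell-1}, z^{-1})$; in particular, its $(2\ell+1, 1)$-entry vanishes.

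The first step is to rule out the coset $U g_\chi I^+$. Suppose for contradiction that $\overline{x} \, j_{1, \ell}(h) = u g_\chi k$ with $u \in U$ and $k \in I^+$. Upper-triangular unipotence of $u$ together with the fact that the last row of $g_\chi$ equals $(\varpi, 0, \ldots, 0)$ yields
\[
(u g_\chi k)_{2\ell+1,\, 1} = (g_\chi k)_{2\ell+1,\, 1} = \varpi \cdot k_{1,1}.
\]
Since $k_{1,1} \in 1 + \mathfrak{p}$, this entry has valuation exactly $1$, contradicting the vanishing of the $(2\ell+1, 1)$-entry of $\overline{x} \, j_{1, \ell}(h)$. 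Hence $\overline{x} \, j_{1, \ell}(h) = uk$ for some $u \in U$ and $k \in I^+$.

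Reading the last row of $uk = k$ (once again using upper-triangularity of $u$) and comparing with that of $\overline{x} \, j_{1, \ell}(h)$ now gives two constraints: the diagonal entry $z^{-1}$ must lie in $1 + \mathfrak{p}$, so $h \in 1 + \mathfrak{p}$; and each $y'_i$, being an off-diagonal entry of $k \in I^+$ below the diagonal, must lie in $\mathfrak{p}$.

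To finish, I use that $\overline{x} \in SO_{2\ell+1}$: expanding ${}^t \overline{x} \, J_{2\ell+1} \, \overline{x} = J_{2\ell+1}$ and isolating the linear terms yields the identity $y'_i = -y_{\ell - i}$ for $i = 1, \ldots, \ell - 1$, so $y \in \mathfrak{p}^{\ell-1}$ as well. Writing $\overline{x} = \prod_{j=2}^{\ell} x_{\epsilon_j - \epsilon_1}(y_{j-1})$ as a product of root-group elements, each factor now lies in the affine root group $U_{(\epsilon_j - \epsilon_1) + 1}$; and the affine root $(\epsilon_j - \epsilon_1) + 1$ evaluates to $(2\ell + 1 - j)/(2\ell) > 1/(2\ell)$ at the barycenter $x$, so it is non-simple. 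It follows that $\overline{x}$ is a product of generators of $I^{++}$, hence lies in $I^{++}$. The main obstacle is verifying the $SO_{2\ell+1}$ identity $y'_i = -y_{\ell - i}$, which is a routine but slightly tedious matrix calculation from the defining equation of the orthogonal group.
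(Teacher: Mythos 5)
Your proof is correct and follows the same overall strategy as the paper's: read off entries of $\overline{x}\,j_{1,\ell}(h)$ and compare against the two cosets $U_{SO_{2\ell+1}} g_\chi I^+$ and $U_{SO_{2\ell+1}} I^+$ that make up the support of $W$. The details of the entry comparisons differ slightly. The paper rules out $U g_\chi I^+$ from the lower-left entry (same as you), then gets $a \in 1+\mathfrak{p}$ from the \emph{first} row $(a,0,\dots,0)$ — since $u_{11} = a\,(k^{-1})_{11} = 1$ — and gets $y \in \mathfrak{p}^{\ell-1}$ directly from the first column. You instead read the \emph{last} row to conclude $a^{-1} \in 1+\mathfrak{p}$ and $y' \in \mathfrak{p}^{\ell-1}$, then pass to $y$ via the orthogonality relation $y'_i = -y_{\ell-i}$ (which one checks holds: expanding ${}^t\overline{x}J\overline{x} = J$ with $\overline{x} = I+N$ gives $N^{t}JN = 0$ here, so the condition reduces to $N^{t}J + JN = 0$, yielding exactly that relation). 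Both routes are equally valid; yours simply shifts the work from the first column to the orthogonality identity. You also spell out the final deduction that $\overline{x} \in I^{++}$ by writing $\overline{x}$ as a product of orthogonal root-group elements $x_{\epsilon_j - \epsilon_1}(y_{j-1})$ and evaluating the affine roots $(\epsilon_j - \epsilon_1)+1$ at the barycenter, getting $(2\ell+1-j)/(2\ell) > 1/(2\ell)$; the paper leaves this step implicit once $y \in \mathfrak{p}^{\ell-1}$ is established, so your version is a modest but genuine strengthening of the write-up.
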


\begin{proof}
Let $h = diag(a,a^{-1}) \in SO_2$.  Note that $\overline{x} j_{n,\ell}(h)$ embeds in $SO_{2\ell+1}$ as

\[
\begin{pmatrix}
a \\
ay & I_{\ell-1}\\
& & 1\\
& & & I_{\ell-1}\\
& & & y' & a^{-1}
\end{pmatrix}.
\]

By comparing lower left matrix entries, one can see that $\overline{x} j_{1,\ell}(h) \notin U_{SO_{2\ell+1}}  g_{\chi}  I^+$.  Moreover, by comparing upper left matrix entries, one can see that if $\overline{x} j_{1,\ell}(h) \in U_{SO_{2\ell+1}} I^+$, we must have that $a \in 1 + \mathfrak{p}$.  This implies immediately, by comparing first columns, that every entry in the column vector $y$ is contained in $\mathfrak{p}$.
\end{proof}

Since $\tau$ is tamely ramified, we obtain

\begin{cor}\label{easysidecomputation}
\[
\int_{U_{SO_{2}} \setminus SO_{2}} \int_{\overline{X}_{(1,\ell)}}
W(\overline{x} j_{1,\ell}(h)) f_s(h, 1) d \overline{x} dh = vol(\mathfrak{p})^{\ell-1} vol(1 + \mathfrak{p}).
\]
\end{cor}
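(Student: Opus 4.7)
The plan is to use Lemma \ref{supportofW} to truncate the domain of integration and then observe that the integrand is identically $1$ on the remaining support.

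First, since $SO_2$ is a torus, $U_{SO_2}$ is trivial and $U_{SO_2} \bs SO_2 \cong SO_2 \cong F^{\times}$ via $\diag(a, a^{-1}) \mapsto a$; likewise $\overline{X}_{(1,\ell)}$ is parameterized by the $\ell - 1$ entries of the column $y$ with measure $d\overline{x}$ inherited as a product from $dv$. By Lemma \ref{supportofW}, the integrand vanishes unless $\overline{x} \in I^{++}$ and $h \in 1 + \mathfrak{p}$, equivalently unless $y \in \mathfrak{p}^{\ell - 1}$ and $a \in 1 + \mathfrak{p}$.

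Next I evaluate the integrand on this support. With $y \in \mathfrak{p}^{\ell - 1}$ and $a \in 1 + \mathfrak{p}$, the matrix $g := \overline{x} j_{1,\ell}(h)$ lies in $I^+$, so writing $g = 1 \cdot g_{\chi}^{0} \cdot g \in U_{SO_{2\ell+1}} \langle g_{\chi} \rangle I^+$, the Whittaker function from \eqref{whittakerfunction} yields $W(g) = \chi(g)$. Inspecting the block-matrix form of $g$ shown in the proof of Lemma \ref{supportofW}, each entry $g_{i, i+1}$ for $i = 1, \ldots, \ell$ and the entry $g_{2\ell, 1}$ is zero, so every term in the defining sum for $\chi$ vanishes and $\chi(g) = \psi(0) = 1$. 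Meanwhile, $f_s(h, 1) = |\det h|^{s - 1/2} \tau(h) = |a|^{s - 1/2} \tau(a) = 1$, since $a \in 1 + \mathfrak{p}$ has absolute value $1$ and $\tau$ is trivial on $1 + \mathfrak{p}$ by tameness.

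Combining these observations, the integral reduces to
\[
\int_{1 + \mathfrak{p}} \int_{\mathfrak{p}^{\ell - 1}} \, dy \, d^{\times}a = \vol(\mathfrak{p})^{\ell - 1} \vol(1 + \mathfrak{p}),
\]
which is the desired identity. I do not expect any genuine obstacle here: the argument is driven entirely by the support lemma and the tameness of $\tau$. The only item worth verifying carefully is that all the matrix entries fed into the affine generic character $\chi$ really do vanish for $g$ on the support, but this is immediate from the explicit matrix displayed in the proof of Lemma \ref{supportofW}.
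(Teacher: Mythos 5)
Your argument is correct and follows the paper's proof in all essential respects: restrict the domain via Lemma \ref{supportofW}, use tameness of $\tau$ to get $f_s(h,1)=1$ on $1+\mathfrak{p}$, and observe the Whittaker function is identically $1$ on the remaining support, leaving a product of volumes. The only (cosmetic) difference is that where the paper simply invokes $W|_{I^{++}}\equiv 1$, you verify this by writing $g=\overline{x}\,j_{1,\ell}(h)$ in the double coset as $1\cdot g_\chi^0\cdot g$ and checking directly that every matrix entry appearing in the affine generic character (the superdiagonal entries $g_{i,i+1}$ and $g_{2\ell,1}$) vanishes, so $\chi(g)=\psi(0)=1$; this is a welcome filling-in of a step the paper states without proof.
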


\proof
By Lemma \ref{supportofW} and since $U_{SO_2} = 1$, we obtain
\[
\int_{U_{SO_{2}} \setminus SO_{2}} \int_{\overline{X}_{(1,\ell)}}
W(\overline{x} j_{1,\ell}(h)) f_s(h, 1) d \overline{x} dh
\]
\[
=\int_{1 + \mathfrak{p}} \int_{\overline{X}_{(1,\ell)}}
W(\overline{x} j_{1,\ell}(h)) f_s(h, 1) d \overline{x} dh.
\]
Since $\tau$ is tamely ramified, $f_s(h,1) = 1 \ \forall h \in 1 + \mathfrak{p}$.  Applying Lemma \ref{supportofW} again, we have that
\[
\int_{1 + \mathfrak{p}} \int_{\overline{X}_{(1,\ell)}}
W(\overline{x} j_{1,\ell}(h)) d \overline{x} dh
\]
\[
=
\int_{1 + \mathfrak{p}} \int_{\overline{X}_{(1,\ell)} \cap I^{++}}
W(\overline{x} j_{1,\ell}(h)) d \overline{x} dh.
\]
Since $W|_{I^{++}} \equiv 1$, this last double integral equals $vol(\mathfrak{p})^{\ell-1} vol(1 + \mathfrak{p})$, as claimed.
\qed

\section{The computation of $\Phi^*(W, f_s)$}\label{hardside}

In this section, we compute $\Phi^*(W,f_s)$, for the same choice of $W$ and $f_s$ as in \S\ref{easyside}.  Since $\tau$ is a character of $GL_1$, $\Phi^*(W,f_s)$ simplifies to
\begin{align*}
\begin{split}
 & \ \Phi^*(W, f_s)\\
 = & \ \gamma(2s-1, \tau, \wedge^2, \psi) \int_{U_{SO_{2}} \setminus SO_{2}} \int_{\overline{X}_{(1,\ell)}}
W(\hat{c}_{1,\ell} \overline{x} j_{1,\ell}(h) \delta_o \omega') M(\tau,s) f_s(h^{-1}, 1) d \overline{x} dh.
\end{split}
\end{align*}
We note in particular that
\[
\omega' =
\begin{pmatrix}
& & 1\\
& I_{2\ell-1} &\\
1 & &
\end{pmatrix}.
\]

\begin{lem}\label{whittakersupport}
In order that $W(\hat{c}_{1,\ell} \overline{x} j_{1,\ell}(h) \delta_o \omega') \neq 0$, it must be that $h^{-1} \in \varpi \cdot (1 + \mathfrak{p})$, and that $\overline{x} \in I^{++}$.
\end{lem}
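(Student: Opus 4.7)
My plan is to exploit the sparse structure of $M := \hat{c}_{1,\ell}\,\overline{x}\,j_{1,\ell}(h)\,\delta_o\,\omega'$ together with an elementary observation about the $(2\ell+1)$-st row of elements of $U_{SO_{2\ell+1}}$ and of $g_\chi$. First, I will write down $M$ explicitly: setting $h = \mathrm{diag}(a, a^{-1})$ and using the orthogonality relation $y'_j = -y_{2\ell+1-j}$ forced by $\overline{x} \in SO_{2\ell+1}$, a direct block computation shows that $M$ has nonzero entries only at
\[
M_{1,2\ell+1} = a, \quad M_{i,i} = -1 \ (2 \leq i \leq 2\ell), \quad M_{i, 2\ell+1} = -a y_{i-1} \ (2 \leq i \leq \ell),
\]
\[
M_{2\ell+1,1} = a^{-1}, \quad M_{2\ell+1,j} = y'_j \ (\ell+2 \leq j \leq 2\ell).
\]

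The key observation is that every $u \in U_{SO_{2\ell+1}}$ is upper triangular unipotent, so its last row is $(0, \ldots, 0, 1)$ and $(uv)_{2\ell+1} = v_{2\ell+1}$ for every column vector $v$. Because the $(2\ell+1)$-st row of $g_\chi$ is $(\varpi, 0, \ldots, 0)$, it follows that any $g \in U_{SO_{2\ell+1}} \langle g_\chi \rangle I^+$ satisfies
\[
(g e_j)_{2\ell+1} = \begin{cases} k_{2\ell+1, j} & \text{if } g = u k \in U_{SO_{2\ell+1}} I^+, \\ \varpi\, k_{1, j} & \text{if } g = u g_\chi k \in U_{SO_{2\ell+1}} g_\chi I^+. \end{cases}
\]
Applying this with $g = M$ and $j = 2\ell+1$ rules out $M \in U_{SO_{2\ell+1}} I^+$, since otherwise $k_{2\ell+1, 2\ell+1}$ would have to equal $(Me_{2\ell+1})_{2\ell+1} = 0$, contradicting $k_{2\ell+1, 2\ell+1} \in 1 + \mathfrak{p}$ for $k \in I^+$.

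Hence $M = u g_\chi k$ for some $u \in U_{SO_{2\ell+1}}$ and $k \in I^+$, and $(Me_j)_{2\ell+1} = \varpi k_{1, j}$ for every $j$. Taking $j = 1$ gives $a^{-1} = \varpi k_{1,1} \in \varpi(1 + \mathfrak{p})$, which, under the identification $SO_2 \cong GL_1$, $\mathrm{diag}(z, z^{-1}) \leftrightarrow z$, reads $h^{-1} \in \varpi(1 + \mathfrak{p})$. Taking $j \in \{\ell+2, \ldots, 2\ell\}$ gives $y'_j = \varpi k_{1, j} \in \varpi \mathfrak{o} = \mathfrak{p}$; the orthogonality relation $y'_j = -y_{2\ell+1-j}$ then forces $y \in \mathfrak{p}^{\ell-1}$. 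Because the affine root subgroups underlying the nontrivial entries of $\overline{x}$ are positive but non-simple affine, this condition is equivalent to $\overline{x} \in I^{++}$.

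The only real obstacle is spotting the right invariant to track; once one recognizes that left multiplication by $U_{SO_{2\ell+1}}$ leaves the last coordinate of any vector unchanged while $g_\chi$ acts on the last coordinate by $\varpi$ times the first coordinate, the rest reduces to reading off matrix entries and invoking the orthogonality relation on $\overline{x}$.
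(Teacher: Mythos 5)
Your proof is correct and follows essentially the same route as the paper's: write out $\hat c_{1,\ell}\,\overline{x}\,j_{1,\ell}(h)\,\delta_o\,\omega'$ explicitly, observe that its last row must match that of $u\,g_\chi^i\,k$ (which is $\varpi(k_{11},\dots,k_{1,2\ell+1})$ when $i=1$ and $(k_{2\ell+1,1},\dots,k_{2\ell+1,2\ell+1})$ when $i=0$), and read off $a^{-1}\in\varpi(1+\mathfrak p)$ and $y\in\mathfrak p^{\ell-1}$. Your treatment is marginally more explicit than the paper's in ruling out the $i=0$ coset via the $(2\ell+1,2\ell+1)$ entry, but the underlying observation and conclusion are identical.
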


\begin{proof}
We first note that if
\[
h =
\begin{pmatrix}
a & 0 \\
0 & a^{-1}
\end{pmatrix},
\]
then
\[
\hat{c}_{1,\ell} \overline{x} j_{1,\ell}(h) \delta_o \omega' =
\begin{pmatrix}
0 & 0 & 0 & 0 & a\\
0 & -I_{\ell-1} & 0 & 0 & -ay\\
0 & 0 & -1 & 0 & 0\\
0 & 0 & 0 & -I_{\ell-1} & 0\\
a^{-1} & 0 & 0 & y' & 0
\end{pmatrix}.
\]
Here (as in \S\ref{easyside}), $y \in F^{\ell-1}$ is a column vector, and $y' \in F^{\ell-1}$ is a row vector.

We now perform some matrix computations.  If $u = (u)_{ij} \in U_{SO_{2\ell+1}}$ and $k = (k)_{ij} \in I^+$, we get that

\[ u g_{\chi} k =
\begin{pmatrix}
& & & & \\
& & * & &\\
\varpi k_{11} & \varpi k_{12} & \cdots & \varpi k_{1n}
\end{pmatrix}
\]

In order that $W(\hat{c}_{1,\ell} \overline{x} j_{1,\ell}(h) \delta_o \omega') \neq 0$, we must have that $\hat{c}_{1,\ell} \overline{x} j_{1,\ell}(h) \delta_o \omega' \in U_{SO_{2\ell+1}} \langle g_{\chi} \rangle I^+$, by definition of $W$.

We first consider the double coset $U_{SO_{2\ell+1}} g_{\chi} I^+$.  Let us compare the last row of $\hat{c}_{1,\ell} \overline{x} j_{1,\ell}(h) \delta_o \omega'$ to the last row of $u g_{\chi} k$.  In particular, since $k_{11} \in 1 + \mathfrak{p}$, we must have that $a^{-1} \in  \varpi \cdot (1 + \mathfrak{p})$.  Since $k_{1j} \in \mathfrak{o}$ for $j \geq 2$, we moreover require that every entry of $y'$ (hence $y$) is in $\mathfrak{p}$.

A similar type of computation shows that $\hat{c}_{1,\ell} \overline{x} j_{1,\ell}(h) \delta_o \omega' \notin U_{SO_{2\ell+1}} I^+$.
\end{proof}
We now have our main result.
\begin{thm}\label{gammafactor}
\[
\gamma(s, \pi \times \tau, \psi) = \chi_{\zeta}(g_{\chi}) \tau(-\varpi) q^{1/2 - s}.
\]
\end{thm}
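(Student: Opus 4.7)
The plan is to apply the functional equation \eqref{functionalequation} with the explicit Whittaker function $W$ from \eqref{whittakerfunction} and the section $f_s$ from \eqref{sectionofprincipalseries}. Corollary \ref{easysidecomputation} already evaluates $\Phi(W, f_s) = \vol(\mathfrak{p})^{\ell-1}\vol(1+\mathfrak{p})$, so the task reduces to computing $\Phi^*(W, f_s)$ and dividing.

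First I would record the simplifications available because $\tau$ is a character of $GL_1$ (so $n = 1$). The exterior-square gamma factor $\gamma(2s-1, \tau, \wedge^2, \psi)$ equals $1$, since $\wedge^2$ of a one-dimensional representation is trivial. The intertwining operator is the identity: for $n = 1$ the unipotent $U_1$ (and hence $\overline{U_1}$) is trivial, and one computes $w_1 = I_2$. The conjugation $h \mapsto {}^\omega h$ sends $\diag(a,a^{-1}) \in SO_2$ to its inverse, and $b_1^* = 1$. Hence the integrand of $\Phi^*$ reduces to $W(\hat{c}_{1,\ell}\,\overline{x}\,j_{1,\ell}(h)\,\delta_o\,\omega')\,f_s(h^{-1},1)$.

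The main computational step is evaluating both factors on the support given by Lemma \ref{whittakersupport}, namely $\overline{x} \in I^{++}$ and $h^{-1} \in \varpi(1+\mathfrak{p})$. For the section, the tame ramification of $\tau$ immediately yields $f_s(h^{-1}, 1) = q^{1/2-s}\,\tau(\varpi)$, up to a sign addressed below. For the Whittaker value, I would factor $M := \hat{c}_{1,\ell}\,\overline{x}\,j_{1,\ell}(h)\,\delta_o\,\omega'$ (whose explicit form appears in the proof of Lemma \ref{whittakersupport}) as $M = u\,g_\chi\,k$ with $u \in U_{SO_{2\ell+1}}$ and $k \in I^+$. Since $g_\chi^2 = I$, left-multiplying $M$ by $g_\chi^{-1}$ produces an upper triangular matrix with diagonal $(\eta, 1,\dots,1,\eta^{-1})$, where $\eta := a^{-1}\varpi^{-1} \in 1+\mathfrak{p}$, and with only two sets of nonzero off-diagonal entries: $\varpi^{-1}y'$ in row $1$ and $ay$ in column $2\ell+1$. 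Since $\overline{x} \in I^{++}$ forces $y \in \mathfrak{p}^{\ell-1}$, all these entries land in $\mathfrak{o}$, so $g_\chi^{-1}M \in I^+$ and the unique factorization is $u = I$, $k = g_\chi^{-1}M$. The entries of $k$ appearing in the formula for the affine generic character $\chi$, namely $k_{12},\dots,k_{\ell,\ell+1}$ and $k_{2\ell,1}$, all vanish, so $\chi_\zeta(k) = 1$ and hence $W(M) = \chi_\zeta(g_\chi)$ on the entire support.

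Assembling everything then yields $\Phi^*(W, f_s) = \chi_\zeta(g_\chi)\,\tau(-\varpi)\,q^{1/2-s} \cdot \vol(\mathfrak{p})^{\ell-1}\vol(1+\mathfrak{p})$, and dividing by the value of $\Phi(W, f_s)$ from Corollary \ref{easysidecomputation} gives the theorem. I expect the main obstacle to be carefully tracking the single factor of $\tau(-1)$ that distinguishes $\tau(-\varpi)$ from $\tau(\varpi)$: this sign emerges from the interplay among the $\pm 1$ entries in $\hat{c}_{1,\ell}$, $\delta_o$, and $\omega'$, the Haar measure identification $SO_2 \cong F^\times$ that relates the integration variable $h$ in $\Phi^*$ (where $h^{-1}$ drives the support) to the corresponding integration variable in $\Phi$, and the conjugation by $\omega$ appearing in the definition of $\Phi^*$. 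Everything beyond this sign-tracking is an Iwahori-style matrix calculation parallel to the one carried out in Section \ref{easyside}.
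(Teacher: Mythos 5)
Your proposal follows the paper's proof essentially step for step: Corollary~\ref{easysidecomputation} for $\Phi(W,f_s)$, the observations that $\gamma(2s-1,\tau,\wedge^2,\psi)=1$, ${}^\omega h = h^{-1}$, $b_1^*=1$, and $M(\tau,s)=\mathrm{id}$ for $n=1$, Lemma~\ref{whittakersupport} for the support, the factorization $M = g_\chi k$ with $k\in I^+$ having vanishing affine-generic entries so that $W(M)=\chi_\zeta(g_\chi)$, and then assembling. Your explicit description of $k=g_\chi^{-1}M$ (diagonal $(\eta,1,\dots,1,\eta^{-1})$ with $\eta=a^{-1}\varpi^{-1}\in 1+\mathfrak{p}$, off-diagonal blocks $\varpi^{-1}y'$ and $ay$) is exactly the matrix appearing in the paper's displayed decomposition, and your observation that $\chi(k)=1$ because $k_{12},\dots,k_{\ell,\ell+1},k_{2\ell,1}$ all vanish is the content of the line ``This decomposition implies\dots that $W(\cdots)=\chi_\zeta(g_\chi)$.'' The one point where you flag uncertainty---the single factor $\tau(-1)$ separating $\tau(\varpi)$ from $\tau(-\varpi)$---is, to be fair, also the one point the paper's proof does not justify in any detail: the value $-\varpi v^{-1}$ simply appears inside $\tau(\cdot)$ and $|\det(\cdot)|$ in the displayed chain of equalities without an intermediate step tracing it to $(h^{-1})_{11}=a^{-1}=\varpi v^{-1}$. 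So your instinct that the sign requires care is well placed, and your caution there is not a gap relative to the paper's own level of detail.
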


\proof
We first note that since $\tau$ is a character of $GL_1$, we have that $\gamma(2s-1, \tau, \wedge^2, \psi) = 1$.

Now let $h = diag(a,a^{-1}) \in SO_2$, and suppose that $W(\hat{c}_{1,\ell} \overline{x} j_{1,\ell}(h) \delta_o \omega') \neq 0$. By Lemma \ref{whittakersupport}, we have that $h \in 1 + \mathfrak{p}$ and $a^{-1} \in  \varpi \cdot (1 + \mathfrak{p})$.  In particular, we can write $a = \varpi^{-1} v$, for some $v \in 1 + \mathfrak{p}$.

We now decompose $\hat{c}_{1,\ell} \overline{x} j_{1,\ell}(h) \delta_o \omega'$ in the double coset $U_{SO_{2\ell+1}} g_{\chi} I^+$ as

\[
\hat{c}_{1,\ell} \overline{x} j_{1,\ell}(h) \delta_o \omega' =
\begin{pmatrix}
0 & 0 & 0 & 0 & a\\
0 & -I_{\ell-1} & 0 & 0 & -ay\\
0 & 0 & -1 & 0 & 0\\
0 & 0 & 0 & -I_{\ell-1} & 0\\
a^{-1} & 0 & 0 & y' & 0
\end{pmatrix}.
\]
\[
=
\begin{pmatrix}
 & & & & \varpi^{-1}\\
& & & & \\
&  & -I_{2 \ell-1} & & \\
& & & & \\
\varpi & & & &
\end{pmatrix}
\begin{pmatrix}
v^{-1} & & & \varpi^{-1} y' &  \\
& I_{\ell-1} & & & ay\\
& & 1 & & \\
& & & I_{\ell-1} & \\
& & & & v
\end{pmatrix}.
\]
This decomposition implies by Lemma \ref{whittakersupport} that $W(\hat{c}_{1,\ell} \overline{x} j_{1,\ell}(h) \delta_o \omega') = \chi_{\zeta}(g_{\chi})$.  Finally, one can see that $M(\tau,s) f_s(h^{-1}, 1) = f_s(h^{-1}, 1)$.  Since the support of $W$ forced $h \in 1 + \mathfrak{p}$, we have that $f_s(h^{-1},1) = 1$.  Therefore,

\begin{align*}
\begin{split}
 & \ \Phi^*(W, f_s)\\
= & \ \int_{SO_{2}} \int_{\overline{X}_{(1,\ell)}}
W(\hat{c}_{1,\ell} \overline{x} j_{1,\ell}(h) \delta_o \omega') M(\tau,s) f_s(h^{-1}, 1) d \overline{x} dh\\
 = & \ vol(\mathfrak{p})^{\ell-1} \ \int_{1 + \mathfrak{p}}
\chi_{\zeta}(g_{\chi})  |\mathrm{det}(-\varpi v^{-1})|^{s-1/2} \tau(-\varpi v^{-1}) dh \\
= & \ vol(\mathfrak{p})^{\ell-1} \chi_{\zeta}(g_{\chi}) |\mathrm{det}(\varpi )|^{s-1/2} \tau(-\varpi ) vol(1 + \mathfrak{p})\\
= & \ vol(\mathfrak{p})^{\ell-1} \chi_{\zeta}(g_{\chi}) q^{1/2-s} \tau(-\varpi ) vol(1 + \mathfrak{p}),
\end{split}
\end{align*}
since $\tau$ is tamely ramified.  Combining this computation with Corollary \ref{easysidecomputation} and \eqref{functionalequation}, we have our result.
\qed

We now compare the $\gamma(s, \pi_{\chi}^{\zeta} \times \tau, \psi)$ with the gamma factors of simple supercuspidal representations of $GL_{2\ell}$.

\begin{cor}\label{funclift}
The representations $\pi_{\chi}^{\zeta}$ of $SO_{2 \ell + 1}$ and $\sigma_{\chi}^{\zeta}$ of $GL_{2\ell}$ (that has trivial central character) share the same twisted gamma factors, where the twisting is over all tamely ramified characters $\tau$ of $GL_1$.  That is,
\[
\gamma(s, \pi_{\chi}^{\zeta} \times \tau, \psi) = \gamma(s, \sigma_{\chi}^{\zeta} \times \tau, \psi),
\]
for all tamely ramified characters $\tau$ of $GL_1$.
\end{cor}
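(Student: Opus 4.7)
The plan is to prove this corollary by directly comparing the two closed-form gamma factor computations that are now available: Theorem \ref{gammafactor} for $\pi_\chi^\zeta$ on the odd orthogonal side, and Lemma \ref{simplesupercuspidalgammagln} (cited from \cite{AL14}) for the general linear side. No new Rankin--Selberg computation is required; the work lies entirely in reconciling the two formulas.

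First, I would record what Theorem \ref{gammafactor} gives: for any tamely ramified character $\tau$ of $GL_1$,
\[
\gamma(s, \pi_\chi^\zeta \times \tau, \psi) = \chi_\zeta(g_\chi)\, \tau(-\varpi)\, q^{1/2 - s}.
\]
Next, I would apply Lemma \ref{simplesupercuspidalgammagln} to $\sigma_\chi^\zeta$ with $n = 2\ell$ and trivial central character (so that $\omega(\varpi) = 1$ and the allowed values of $\zeta$ as a $2\ell$-th root of $\omega(\varpi)$ include $\pm 1$, matching the two possibilities on the $SO_{2\ell+1}$ side). This gives
\[
\gamma(s, \sigma_\chi^\zeta \times \tau, \psi) = \tau(-1)^{2\ell - 1}\, \tau(\varpi)\, \chi_\zeta(g_\chi)\, q^{1/2 - s}.
\]

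The one small manipulation is to simplify the sign. Since $\tau$ is a character of $F^\times$, $\tau(-1)^2 = \tau(1) = 1$, and because $2\ell - 1$ is odd we obtain $\tau(-1)^{2\ell - 1} = \tau(-1)$. Therefore $\tau(-1)^{2\ell - 1}\, \tau(\varpi) = \tau(-\varpi)$, and the right-hand side of the $GL_{2\ell}$ formula collapses to $\chi_\zeta(g_\chi)\, \tau(-\varpi)\, q^{1/2-s}$, which is exactly the $SO_{2\ell+1}$ expression.

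The main obstacle, such as it is, is purely notational: one must verify that the symbol $\chi_\zeta(g_\chi)$ really denotes the same complex number on both sides. On the $SO_{2\ell+1}$ side $\chi_\zeta(g_\chi) = \zeta \in \{\pm 1\}$ by the construction in \S\ref{simplesupercuspidalsoddorthogonal}, and on the $GL_{2\ell}$ side with trivial central character the extension $\chi_\zeta$ to $H' = \langle g_\chi \rangle H$ satisfies $\chi_\zeta(g_\chi) = \zeta$ with $\zeta$ any $2\ell$-th root of $1$; restricting to $\zeta = \pm 1$ matches the two simple supercuspidals of $SO_{2\ell+1}$ indexed by the same symbol. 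Once this identification is made, the equality of gamma factors is immediate, completing the proof.
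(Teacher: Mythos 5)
Your proposal is correct and takes essentially the same route as the paper: the paper's proof of Corollary \ref{funclift} is simply to match the formula in Theorem \ref{gammafactor} against the one in Lemma \ref{simplesupercuspidalgammagln}, and your write-up just spells out that matching, including the reduction $\tau(-1)^{2\ell-1}\tau(\varpi)=\tau(-\varpi)$ and the identification $\chi_\zeta(g_\chi)=\zeta\in\{\pm1\}$ on both sides under the trivial-central-character hypothesis.
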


\proof
This is just a matching of the gamma factors from Theorem \ref{gammafactor} and Lemma \ref{simplesupercuspidalgammagln}.
\qed

\section{Depth-preservation}\label{depthpreservation}
In this section, we briefly recall a conjecture about preservation of depth in the local Langlands correpsondence.  We then show that if one assumes the conjecture, the representation $Ind_{W_E}^{W_F} \xi$ of $W_F$ that we constructed in \S\ref{introduction} is in fact the Langlands parameter of $\pi_{\chi}^{\zeta}$.

We recall that the depth of a representation $\varphi : W_F' \rightarrow {}^L G$ of the Weil-Deligne group is defined as $\mathrm{inf} \{r \ | \ \varphi(I_s) = 1 \ \mathrm{for \ all} \ s > r \}$, where $I_s, s \in \mathbb{Q}$, is the filtration of the inertia subgroup $I$ of $W_F$.

Suppose that $\mathbf{G}$ is a tamely ramified $p$-adic group, and let $\varphi : W_F' \rightarrow {}^L G$ be the Langlands parameter of a representation $\pi$ of $\mathbf{G}(F)$.  It is conjectured that if $p$ is sufficiently large, then $depth(\pi) = depth(\varphi)$.

\begin{cor}\label{functoriallift}
Assume that the depth perservation conjecture holds.  Then, for $p$ sufficiently large, the Langlands parameter of $\pi = \pi_{\chi}^{\zeta}$ is $Ind_{W_E}^{W_F} \xi$.
\end{cor}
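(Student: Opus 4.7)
The plan is to show that, under the depth-preservation conjecture and for $p$ large, the standard functorial lift $\Pi$ of $\pi = \pi_\chi^\zeta$ from $SO_{2\ell+1}$ to $\GL_{2\ell}$ is exactly the simple supercuspidal $\sigma_\chi^\zeta$ of $\GL_{2\ell}$ with trivial central character; the corollary then follows immediately by invoking the explicit LLC for simple supercuspidals of $\GL_{2\ell}$ recalled in \S\ref{introduction}. Concretely, I would let $\Pi$ denote the admissible representation of $\GL_{2\ell}(F)$ whose Langlands parameter is obtained by composing $\varphi_\pi : W_F' \to \Sp_{2\ell}(\BC)$ with the standard embedding $\Sp_{2\ell}(\BC) \hookrightarrow \GL_{2\ell}(\BC)$, and write $\Pi = \Pi_1 \boxplus \cdots \boxplus \Pi_r$ as an isobaric sum of discrete series $\Pi_i$ of $\GL_{n_i}(F)$.

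To force $r = 1$ and $\Pi_1$ to be a simple supercuspidal, I would use depth preservation in both directions. Since $\pi$ is simple supercuspidal, $\mathrm{depth}(\pi) = 1/(2\ell)$, so the conjecture gives $\mathrm{depth}(\varphi_\pi) = 1/(2\ell)$; conversely, since $\varphi_\pi = \bigoplus \varphi_{\Pi_i}$, each $\varphi_{\Pi_i}$ has depth at most $1/(2\ell)$, and depth preservation applied to $\Pi_i$ gives $\mathrm{depth}(\Pi_i) \leq 1/(2\ell)$. The structural fact that a positive-depth discrete series of $\GL_{n_i}(F)$ has depth at least $1/n_i$ then implies that each $\Pi_i$ is either of depth $0$ or has $n_i \geq 2\ell$. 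If every $\Pi_i$ were of depth $0$, then $\varphi_{\Pi_i}(I_s) = 1$ for all $s > 0$ and all $i$, hence $\varphi_\pi(I_s) = 1$ for all $s > 0$, contradicting $\mathrm{depth}(\varphi_\pi) = 1/(2\ell)$. Thus some $n_i \geq 2\ell$, and combined with $\sum n_i = 2\ell$ this forces $r = 1$ and $\Pi$ to be an irreducible supercuspidal of $\GL_{2\ell}(F)$ of depth exactly $1/(2\ell)$, i.e.\ a simple supercuspidal.

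To identify $\Pi$ with $\sigma_\chi^\zeta$, I would compare twisted gamma factors. Because $\Pi$ is the standard lift, $\gamma(s, \Pi \times \tau, \psi) = \gamma(s, \pi \times \tau, \psi)$ for every character $\tau$ of $\GL_1$, so combining Theorem \ref{gammafactor} and Lemma \ref{simplesupercuspidalgammagln} yields $\gamma(s, \Pi \times \tau, \psi) = \gamma(s, \sigma_\chi^\zeta \times \tau, \psi)$ for every tamely ramified $\tau$. Since $\varphi_\pi$ takes values in $\Sp_{2\ell}(\BC)$, the central character of $\Pi$ is trivial, matching that of $\sigma_\chi^\zeta$. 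The uniqueness statement in \cite[Remark 3.18]{AL14} then forces $\Pi = \sigma_\chi^\zeta$, and the explicit LLC for simple supercuspidals of $\GL_{2\ell}$ from \cite{AL14, BH10, BH13} identifies the parameter of $\sigma_\chi^\zeta$ as $\Ind_{W_E}^{W_F}\xi$, which is $\varphi_\pi$ viewed as a $2\ell$-dimensional representation. The main obstacle is the depth-reduction in the second paragraph: one must carefully combine the conjecture with the known gap between depth $0$ and depth $1/n$ for supercuspidals of $\GL_n$ to exclude proper isobaric decompositions, and be sure that pure depth $0$ on the Galois side forces triviality on all of wild inertia; once that is in hand, the identification via gamma factors and the appeal to LLC for $\GL_{2\ell}$ are purely formal.
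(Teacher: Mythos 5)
Your proof is correct and follows essentially the same route as the paper's: use the depth-preservation conjecture to pin $\mathrm{depth}(\varphi_\pi) = \frac{1}{2\ell}$, use the arithmetic of admissible depths to force irreducibility, then identify the resulting simple supercuspidal lift via Corollary~\ref{funclift} and \cite[Remark~3.18]{AL14} and read off the parameter from the explicit LLC for $\GL_{2\ell}$. The one small difference is where you run the depth arithmetic: the paper stays entirely on the Galois side, noting directly that an irreducible $n_i$-dimensional summand $\varphi_i$ of $\varphi_\pi$ has $\mathrm{depth}(\varphi_i) \in \{a/n_i : a \in \BN\} \cup \{0\}$, so $\frac{1}{2\ell}$ cannot be attained unless $n_i = 2\ell$; you instead pass to the automorphic side, invoking depth preservation a second time for the constituents $\Pi_i$ of the isobaric lift and the analogous depth gap for discrete series of $\GL_{n_i}$. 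Both versions rest on the same gap between depth~$0$ and depth~$\frac{1}{n_i}$, and since depth preservation is known for $\GL_n$ your detour is harmless, but the paper's direct Galois-side computation is slightly more economical. Your closing worry about whether ``depth $0$ forces triviality on wild inertia'' is a non-issue: depth $0$ for a parameter means by definition $\varphi(I_s)=1$ for all $s>0$.
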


\proof
Suppose, to the contrary, that the Langlands parameter $\varphi_{\pi} : W_F \rightarrow Sp(2\ell,\mathbb{C}) \hookrightarrow GL(2\ell,\mathbb{C})$ is reducible.  Write $\varphi_{\pi} = \displaystyle\bigoplus_{i=1}^m \varphi_i$, with $\varphi_i : W_F \rightarrow GL(n_i,\mathbb{C})$.  Since $depth(\varphi_i) \in \{ \frac{a}{n_i} \ | \ a \in \mathbb{N} \} \cup \{0 \}$, and since $depth(\varphi) = \frac{1}{2\ell}$ by assumption, we see immediately that $m = 1$.

We therefore conclude, in particular, that the functorial lift of $\pi$ to $GL(2\ell,F)$ (under the embedding $Sp(2\ell,\mathbb{C}) \hookrightarrow GL(2\ell,\mathbb{C})$) is supercuspidal and has depth $\frac{1}{2\ell}$, so in fact is simple supercuspidal.  We also note that this lift must have trivial central character since it is a lift from $SO_{2\ell+1}$.  By Corollary \ref{funclift} and by \cite[Remark 3.18]{AL14}, this lift must be $\sigma_{\chi}^{\zeta}$ (with trivial central character).  The main result of \cite[\S3]{AL14} says that the Langlands parameter of $\sigma_{\chi}^{\zeta}$ is $Ind_{W_E}^{W_F} \xi$.
\qed


\begin{thebibliography}{}
\bibitem[A15] {A15}
M. Adrian,
{\it On The Langlands parameter of a simple supercuspidal representation: Even Orthogonal Groups.}
Preprint, 2015.
\bibitem[AL14] {AL14}
M. Adrian and B. Liu,
{\it Some results on simple supercuspidal representations of $\GL_n(F)$.}
Submitted. 2014.
\bibitem[AL15] {AL15}
M. Adrian and B. Liu,
{\it On The Langlands parameter of a simple supercuspidal representation: Symplectic Groups.}
Preprint, 2015.
\bibitem[ACS14] {ACS14}
M. Asgari, J. Cogdell, and F. Shahidi
{\it Local Transfer and Reducibility of Induced Representations of $p$-adic Groups of Classical Type}
Preprint, 2014.
\bibitem[BH06] {BH06}
C. Bushnell and G. Henniart,
{\it The Local Langlands Conjecture for $\GL(2)$.}
A Series of Comprehensive Studies in Mathematics, Volume \textbf{335}, Springer Berlin Heidelberg, 2006.
\bibitem[BH10] {BH10}
C. Bushnell and G. Henniart,
{\it The essentially tame local Langlands correspondence, III: the general case.}
Proc. Lond. Math. Soc. (3) \textbf{101} (2010), no. 2, 497--553.
\bibitem[BH13] {BH13}
C. Bushnell and G. Henniart,
{\it Langlands parameters for epipelagic representations of $\GL_n$.}
Math. Ann. 358 (2014), no. 1-2, 433–463.
\bibitem[DR09]{DR09}
  S. DeBacker and M. Reeder,
  {\it Depth-zero supercuspidal $L$-packets and their stability.}
  Ann. of Math. (2) 169 (2009), no. 3, 795--901.
\bibitem[GPSR87] {GPSR87}
S. Gelbart, I.I. Piatetski-Shapiro, and S. Rallis,
{\it $L$-functions for $G \times GL(n)$.}
Lecture Notes in Math., vol. 1254, Springer-Verlag, New York, 1987.
\bibitem[G90] {G90}
D. Ginzburg,
{\it $L$-functions for $SO_n \times GL_k$.}
J. Reine Angew. Math. 405 (1990) 156--180.
\bibitem[GPSR97] {GPSR97}
D. Ginzburg, I.I. Piatetski-Shapiro, and S. Rallis,
{\it $L$-functions for the Orthogonal Group.}
Mem. Amer. Math. Soc., vol. 611, Amer. Math. Soc., Providence, RI, 1997.
\bibitem[GRS98] {GRS98}
D. Ginzburg, S. Rallis, and D. Soudry,
{\it $L$-functions for symplectic groups.}
Bull. Soc. Math. France 126 (1998) 181--244.
\bibitem[GR10] {GR10}
B. Gross and M. Reeder,
{\it Arithmetic invariants of discrete Langlands parameters.}
Duke Math. J. \textbf{154} (2010), no. 3, 431--508.
\bibitem[K14i] {K14i}
V. Kala {\it Density of Self-Dual Automorphic Representations of $GL_n(A_Q)$.}
Ph.D. Thesis, Purdue University, 2014.
\bibitem[K13] {K13}
T. Kaletha,
{\it Simple wild L-packets.}
J. Inst. Math. Jussieu 12 (2013), no. 1, 43--75.
\bibitem[K15] {K15}
T. Kaletha,
{\it Epipelagic L-packets and rectifying characters.}
Invent. Math., to appear.
\bibitem[K14ii] {K14ii}
E. Kaplan
{\it Complementary results on the Rankin-Selberg gamma factors of classical groups.}
J. Number Theory (2014), \url{http://dx.doi.org/10.1016/j.jnt.2013.12.002}
\bibitem[KL15] {KL15}
A. Knightly and C. Li,
{\it Simple supercuspidal representations of $\GL(n)$.}
Taiwanese J. Math., to appear.
\bibitem[R08] {R08}
M. Reeder,
{\it Supercuspidal L-packets of positive depth and twisted Coxeter elements.}
Crelle's Journal, 620,  (2008),  pages 1--33.
\bibitem[RY13] {RY13}
M. Reeder and J.-K Yu,
{\it Epipelagic representations and invariant theory.}
J. Amer. Math. Soc., to appear. 2013.
\bibitem[Sha90] {Sha90}
F. Shahidi,
{\it A proof of Langlands' conjecture on plancherel measures; complementary series for $p$-adic groups.}
Ann. of Math. 132 (1990), 273--330.
\bibitem[Sou93] {Sou93}
D. Soudry,
{\it Rankin-Selberg Convolutions for $SO_{2l+1} \times GL_n$: Local Theory.}
Mem. Amer. Math. Soc., vol. 500, Amer. Math. Soc., Providence, RI, 1993.
\end{thebibliography}
\end{document}